\documentclass{article}
\usepackage{amsmath,amssymb,amsfonts,MnSymbol, mathrsfs,graphicx}
\usepackage{tikz,pgfplots,bm}
\usetikzlibrary{patterns}
\usetikzlibrary{shapes.geometric}

\def\e{{\rm e}}

\newtheorem{proposition}{Proposition}% 
\newtheorem{remark}{Remark}% 

\def\vI{{\mathbf{I}}}

\def\vF{{\mathbf{F}}}
\def\vP{{\mathbf{P}}}

\def\E{{\mathbb{E}}}

\def\Z{{\mathbb{Z}}}

\def\vk{{\mathbf{k}}}

\def\p{\partial}
\def\n{\nabla}

\def\d{\hbox{d}}
\def\One{{\bf 1}}

\def\vx{{\bf x}}

\def\vu{{\bf u}}

\newcommand{\ds}{\displaystyle}

\def\vom{{\bm\omega}}
\def\SS{{\mathbb{S}_2}}

\title{Collimated Sunlight and  Air Temperature}
\author{Olivier Pironneau\\ Sorbonne Université, Paris, France}

\begin{document}
\maketitle

\begin{abstract}
Atmospheric temperature on Earth results from complex phenomena that climate models must account for. One important module is radiation, which has three sources: infrared radiation emitted by the Earth and the air, and visible light from the Sun. The latter arrives from an almost point-like source in the sky, producing a Dirac singularity in the boundary conditions known as collimated light. Following Siewert and Maiorino (1980), we propose a very fast numerical implementation to handle the singularity.
\end{abstract}

\section*{Introduction}

In this article, we investigate the effect of collimated sunlight on a stratified atmosphere, focusing on how radiation influences air temperature.

Radiative transfer in the atmosphere involves two distinct sources: infrared emission from the Earth's surface and visible light from the sun. A key difficulty is that the sun subtends a nearly point-like angle in the sky, so its contribution enters as collimated light, proportional to 
$\delta(\vom-\vom_s)$ , the Dirac mass concentrating the radiation along the direction 
$\vom_s$  of the solar beam. To handle this singular source term, we follow the approach of Chandrasekhar \cite{CHA}, p240, and Siewert et al \cite{siewertM}.

A further complication concerns the vector nature of radiation. Polarization describes the orientation and evolution of the electric field associated with an electromagnetic wave, and although many radiative transfer models treat light as a scalar quantity, real atmospheric scattering processes modify not only the intensity of radiation but also its polarization state. This is particularly relevant to the atmosphere, since both Rayleigh scattering by air molecules and scattering by cloud droplets generate and transform polarized light. The resulting polarization state governs how radiation is redistributed in angle and intensity, and therefore directly affects the amount of energy absorbed within the atmosphere.

To account for these effects, we adopt the vector formulation of radiative transfer developed by Chandrasekhar \cite{CHA} and Pomraning \cite{POM2}, derived from the general equations for the Stokes vector \cite{CHA}, \cite{MISH} of quasi-plane EM waves.

For the numerical algorithm, we follow \cite{bookRTE} and reformulate it as a vector integral system, extended to handle the collimated solar source described above.

Numerical results indicate that GHG-augmented absorption leads to markedly different temperature responses depending on whether polarization is accounted for or not.

\section{Vector Radiative Transfer}

We first recall the governing equations coupling the Stokes vector of the radiation to the atmospheric temperature.

The electromagnetic radiation of interest for the atmosphere is a quasi-plane wave of very high frequency $\nu$, with a quasi-constant wave vector $\vk$. Equivalently, instead of $\vk$ and its intensity,  it can be characterized by its Stokes vector $\vI=[I,Q,U,V]^T$ and its direction $\vom$, i.e.\ the polar angle $\theta$ and azimuthal angle $\varphi$ on the unit sphere $\SS$.; $I$. is the radiative intensity and $Q,U,V$ is the polarization state.

When the refractive index is constant, the general Vector Radiative Transfer Equation (VRTE)  reads,
\begin{align}\label{VRTE2}&
\frac1c\p_t\vI+\vom\n_\vx \vI + \kappa\vI = \frac{\sigma_s}{4\pi}\int_\SS\Z(\vx,\vom',\vom)\vI(\vom')\d\omega' + \vF(T(\vx)),
\cr
&\rho c_V(\p_tT+\vu\cdot\nabla T)-\nabla\cdot({\mu_T}\nabla T)
=-{ T_0}{ B_0}\int_0^\infty\nabla\cdot\int_\SS I(\vx,\vom)\vom \d\vom \d\nu.
\end{align}
where $c$ is the speed of light, $\sigma_s$ the scattering coefficient, $\kappa$ the absorption coefficient, $\Z$ is the {phase matrix} for scattering from $\vom'$ to $\vom$, and $\vF$ is the source term due to atmospheric emission. 

The VRTE was formally derived from Maxwell's equations by Papanicolaou, Ryzhik and Keller \cite{papanico}. 

In the temperature equation for $T$ the right-hand side is the heat source due to the  radiation, the air density is $\rho$, $c_V$ is the specific heat capacity at constant volume, and $\mu_T$ is the thermal diffusivity; $T_0=4798$ and $B_0=1.744\times 10^6$ are scaling parameters. The wind velocity $\vu$ is either measured or obtained by solving the equations of meteorology.

Recall that a black body at temperature $T$ emits electromagnetic radiation at all frequencies $\nu$ proportionally to the Planck function.  If $T$ is scaled by $T_0$, $\nu$ by $\nu_0=10^{14}$ and Planck's function by $B_0$, then
 \[
 B_\nu(T)=\frac{\nu^3}{\e^{\frac{\nu}{T}}-1}, \qquad \hbox{ (Planck function, rescaled)}.
 \]
Thus $\vF(T(\vx))$ is proportional to $B_\nu(T(\vx))$, where $T(\vx)$ is  air temperature at $\vx$.

\subsection{The Stratified Case}\label{stratcase}
%%%%%%%%%%%%%%%%%%%%%%%%%%%%%%

If the physical domain $\Omega$ is the region between two infinite horizontal parallel planes, $z=0$ and $Z\approx 12$km, and if the ground and clouds are flat, then nothing depends on the horizontal coordinates $x,y$, and system \eqref{VRTE2} reduces to
\begin{align}\label{vrtegen}&
\mu\p_z \vI + \kappa\vI= \frac{\sigma_s}{4\pi}\int_0^{2\pi}\int_{-1}^1\Z(z,\mu',\mu,\varphi',\varphi)\vI(z,\mu',\varphi')d\mu' d\varphi' + \vF(T)
\cr&
\int_0^\infty \sigma_a B_\nu(T)  d\nu = \tfrac1{4\pi}\int_0^\infty\int_0^{2\pi} \int_{-1}^1 \sigma_a I d\mu d\nu 
\end{align}
where $\mu=\cos\theta$ and $\sigma_a=\kappa-\sigma_s$. The second equation is derived from the temperature equation when $\mu_T$ and $\vu$ are neglected; time $t$ has disappeared because $c\gg1$ and we neglect the temperature boundary layers.

In most cases, $\kappa=\rho\kappa_\nu$, where $\kappa_\nu$ depends only on $\nu$. A change of variables turns the altitude $z$ into the optical thickness, and $\rho$ disappears from the equations.
The atmosphere at temperature T generates radiation which according to Kirchhoff and Planck laws gives
\[
\vF(T)=[\sigma_a B_\nu (T),0,0,0]^T.
\]

\subsection{Boundary Conditions}\label{vectcollim}
%%%%%%%%%%%%%%%%%%%%%%%%%%%%%
Collimated light from the Sun, at angles defined by $\mu_s>0,\varphi_s$,  enters at the tropopause $Z$, while the infrared radiation at ground level is described by Lambert emission of intensity $c_e$ with Lambert albedo with coefficient $q_0$. This gives, for all  $\varphi\in(0,2\pi)$,
\begin{align}\label{bdc}&
I(Z,\mu,\varphi)= c_s B_\nu(T_s) \delta(\mu+\mu_s)\delta(\varphi-\varphi_s),  \quad \mu<0,
\cr&\ds
I(0,\mu,\varphi) = c_e\mu B_\nu(T_e)
+ \frac{q_0}{2\pi}\int_0^{2\pi}\int_{-1}^0 I(0,\mu',\varphi')\mu'\d\mu'\d\varphi', \quad  \mu>0.
\cr&
\end{align}  

\subsection{Azimuthal Independence}
%%%%%%%%%%%%%%%%%%%
As $\varphi$ appears only in $I(Z)$ we consider $\bar\vI$ defined by

\begin{align}\label{vrtegenb}&
\mu\p_z \bar\vI + \kappa\bar\vI = \frac{\sigma_s}2\int_{-1}^1\bar \Z(z,\mu',\mu)\bar\vI(z,\mu')d\mu'  + \vF(T).
\cr&
\int_0^\infty \sigma_a B_\nu(T)  d\nu = \int_0^\infty\tfrac12 \int_{-1}^1 \sigma_a \bar I d\mu d\nu 
\end{align}
where $\bar\Z$ is the $\varphi$-average of $\Z$ and with
\begin{align}\label{bdcb}&
\bar I(Z,\mu)= 0, \quad \mu<0,
\cr&\ds
\bar I(0,\mu) = c_e\mu B_\nu(T_e)
+ q_0\int_{-1}^0 \bar I(0,\mu')\mu'\d\mu', \quad  \mu>0,
\end{align}
and $\bar Q,\bar U,\bar V$  zero at $(z=0,\mu>0)\cup(z=Z,\mu<0)$.

If $\delta(\varphi-\varphi_s)$ was absent from \eqref{bdc} then a solution of  \eqref{vrtegenb}, \eqref{bdcb} is solution of \eqref{vrtegen} because the average over $\varphi'$ has the effect of replacing $\Z$ by $\bar\Z$.

\medskip

Let us isolate the azimuthal dependence of the solution of \eqref{vrtegen}.
\begin{proposition}
\[
\vI = \bar\vI + \vI'
\]
with $\vI'$ solution of
\begin{align}\label{vrtegenc}&
\mu\p_z \vI' + \kappa\vI'= \frac{\sigma_s}{4\pi}\int_0^{2\pi}\int_{-1}^1\Z(z,\mu',\mu,\varphi',\varphi)\vI'(z,\mu',\varphi')d\mu' d\varphi' 
\cr&
I'(Z,\mu,\varphi)= c_s B_\nu(T_s) \delta(\mu+\mu_s)\delta(\varphi-\varphi_s),  \quad \mu<0,
\cr&\ds
I'(0,\mu,\varphi) =  \frac{q_0}{2\pi}\int_0^{2\pi}\int_{-1}^0 I'(0,\mu',\varphi')\mu'\d\mu'\d\varphi', \quad  \mu>0.
\end{align}
\end{proposition}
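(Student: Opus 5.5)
This follows from linearity and superposition. We take $\vI$ to be the solution of \eqref{vrtegen}--\eqref{bdc} with the temperature $T$ held fixed; the source $\vF(T)$ is then a given datum, independent of $\vI$. We take $\bar\vI$ to be the solution of \eqref{vrtegenb}--\eqref{bdcb}, regarded as a function of $(z,\mu)$ only and extended to all $\varphi$. We then \emph{define} $\vI':=\vI-\bar\vI$ and check that it satisfies \eqref{vrtegenc}. Equivalently, we solve \eqref{vrtegenc} for $\vI'$ and verify that $\bar\vI+\vI'$ satisfies \eqref{vrtegen}--\eqref{bdc}. Uniqueness for the linear transfer problem, which holds when $q_0<1$ and $\sigma_s\le\kappa$ by the usual maximum-principle or contraction argument on the integral formulation, then identifies this sum with $\vI$.

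The key step is to show that $\bar\vI$, seen as a $\varphi$-independent function, solves the full azimuthal equation \eqref{vrtegen} with the non-collimated data. Since $\bar\vI(z,\mu')$ does not depend on $\varphi'$, we have
\[
\frac{1}{4\pi}\int_0^{2\pi}\int_{-1}^1 \Z(z,\mu',\mu,\varphi',\varphi)\,\bar\vI(z,\mu')\,d\mu'd\varphi' = \frac12\int_{-1}^1 \bar\Z(z,\mu',\mu)\,\bar\vI(z,\mu')\,d\mu'.
\]
Here $\bar\Z(z,\mu',\mu)=\frac1{2\pi}\int_0^{2\pi}\Z\,d\varphi'$. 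This uses the fact that $\Z$ depends on the azimuths only through $\varphi-\varphi'$, so the average is independent of $\varphi$; this is exactly the remark made after \eqref{bdcb}. Similarly, at the ground,
\[
\frac{q_0}{2\pi}\int_0^{2\pi}\int_{-1}^0 \bar I(0,\mu')\mu'\,d\mu'd\varphi' = q_0\int_{-1}^0\bar I(0,\mu')\mu'\,d\mu'.
\]
Hence $\bar\vI$ satisfies \eqref{vrtegen} with source $\vF(T)$, the Lambert condition with emission term $c_e\mu B_\nu(T_e)$, and zero incoming data at $z=Z$.

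Subtracting, $\vI'=\vI-\bar\vI$ satisfies the homogeneous transport equation, because the operator $\mu\p_z+\kappa-\text{scattering}$ is linear and the $\vF(T)$ terms cancel. At $z=Z$ with $\mu<0$, the incoming data of $\vI'$ is the collimated term minus $0$. At $z=0$, the emission terms $c_e\mu B_\nu(T_e)$ cancel, and the reflection term is linear, so the Lambert condition for $\vI'$ is the homogeneous one. The $Q,U,V$ components have zero data for both problems, so their differences vanish as well. These are precisely the equations \eqref{vrtegenc}.

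The main obstacle is making sense of the Dirac datum, since $\vI'$ is a measure-valued or distributional solution. To handle it, we would split $\vI'$ further into its uncollided part,
\[
c_sB_\nu(T_s)\,\e^{-\int_z^Z\kappa/\mu_s}\,\delta(\mu+\mu_s)\,\delta(\varphi-\varphi_s),
\]
plus a diffuse remainder solving a problem with a bounded source. The linearity arguments above then apply in the weak (integral-equation) sense. A second, minor point is that the second equation of \eqref{vrtegen} couples $T$ to $\vI$. The decomposition is therefore stated for given $T$, and the coupling is handled afterwards by solving for $T$ with the total $I=\bar I+I'$.
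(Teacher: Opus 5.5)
Your proposal is correct and is essentially the paper's proof: you show that the $\varphi$-independent $\bar\vI$ also solves \eqref{vrtegen} with the non-collimated data, since $\frac1{4\pi}\int_0^{2\pi}\Z\,d\varphi'=\frac12\bar\Z$ and the Lambert term averages the same way, and then you superpose by linearity. Your extra care refines the route rather than changing it: you freeze $T$, which the paper's ``all equations are linear'' tacitly requires because the temperature must be computed from the total $I$, and you treat the Dirac datum through the uncollided beam; the one slip is the uniqueness condition $q_0<1$, which under the paper's sign convention ($q_0=-0.3$, reflection operator of sup-norm $|q_0|/2$) should be a bound on $|q_0|$, though your primary route $\vI':=\vI-\bar\vI$ does not need uniqueness at all.
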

\begin{proof}:
Note that $\bar\vI$  satisfies also \eqref{vrtegen} because 
\[
\frac12\int_{-1}^1\bar\Z\bar\vI\d\mu = \frac1{4\pi}\int_0^{2\pi}\int_{-1}^1\bar\Z\bar\vI\d\mu\d\varphi
=\int_\SS\Z(\vom,\vom')\bar\vI(\vom')\d\vom.
\]
All equations are linear; by adding those for $\bar\vI$ to those for $\vI'$, \eqref{vrtegen}, \eqref{bdc} are obtained.
\end{proof}
\begin{remark}
In the Stokes vector, $I$ is the radiative intensity, $Q$ the polarization intensity, and $U$, $V$ describe the polarization state. Alternatively, one may work with $\vI=[I_l,I_r,U,V]^T$, where $I_l=(I+Q)/2$ and $I_r=(I-Q)/2$. The equations are unchanged, but the phase matrix is now denoted $\Z'$, and the radiation volume source term becomes $\vF'(T) = \tfrac12\sigma_a B_\nu(T)[1,1,0,0]^T$.
\end{remark}
\subsection{Scaling}
All intensities  are divided by $B_0=1.744\times 10^6$. The temperature and the frequency are likewise scaled, respectively, by $T_0=4798$ and $\nu_0=10^{14}$ (see \cite{bookRTE}). Kirchhoff's law implies $\sigma_s=\kappa a_s$, $\sigma_a=\kappa(1-a_s)$, where $0\le a_s<1$ is the scattering parameter.
\subsection{The Rayleigh Phase Matrix}
%%%%%%%%%%%%%%%%%%%%
Following eq.(220) in \cite{CHA}, p42, the phase matrix for $[I_l,I_r,U,V]^T$ is
\begin{equation*}
\Z'\left(\mu, \varphi ; \mu^{\prime}, \varphi^{\prime}\right)
=\vP^{(0)}+(1-\mu^2)^\frac12(1-\mu')^\frac12 \boldsymbol{P}^{(1)}+\boldsymbol{P}^{(2)}\left(\mu, \varphi ; \mu^{\prime}, \varphi^{\prime}\right)
\end{equation*}
The top left $2\times 2$ blocks of these matrices are
\begin{equation}\label{P0}
\vP^0=\frac34\left(\begin{array}{cc}
2\left(1-\mu^2\right)\left(1-\mu^{\prime 2}\right)+\mu^2 \mu^{\prime 2} & \mu^2 \\
\mu^{\prime 2} & 1 \\
\end{array}
\right),
\end{equation}
\begin{equation}\label{P1}
\boldsymbol{P}^1=
\frac{3}{4} \cos \left(\varphi^{\prime}-\varphi\right) \left(\begin{array}{cc}
4 \mu \mu^{\prime}& 0 \\
0 & 0  \\
\end{array}\right),
\end{equation}
\begin{equation}\label{P2}
\boldsymbol{P}^2
=\frac{3}{4} \cos 2\left(\varphi^{\prime}-\varphi\right) \left(\begin{array}{cc}
\mu^2 \mu^{\prime 2}& -\mu^2 \\
-\mu^{\prime 2}  & 1
\end{array}\right).
\end{equation}
Hence, $\bar\Z'=P^0$. Pomraning \cite{POM} suggests using a linear combination of the Rayleigh matrix with the isotropic scattering matrix,
{%\scriptsize
\begin{equation}\label{Zp}
\bar\Z' =
\frac{3\beta}4\left[\begin{matrix}
	2(1-\mu^2)(1-\mu'^2) + \mu^2 \mu'^2 & \mu^2 \cr
	\mu'^2 & 1 
	\end{matrix}\right]
	+
\frac{1-\beta}2
\left[\begin{matrix}
1 & 1 \cr
1 & 1 
\end{matrix}\right].
\end{equation}

\section{Decoupling  the Stokes Vector Components}
%%%%%%%%%%%%%%%%%%%%%%%%%%%
The decomposition result of \cite{siewertM} applied to \eqref{vrtegenc}, implies the following.
\begin{proposition}
\begin{align}\label{siew}
\left[\begin{matrix}
I'_l \cr
I'_r
\end{matrix}\right](z,\mu,\varphi)
=
\left[\begin{matrix}
\tilde I'_l \cr
\tilde I'_r
\end{matrix}\right](z,\mu)
&
+ [(1-\mu^2)(1-\mu_s^2))]^{\frac12}\psi^1(z,\mu)\vP^1(\mu,\varphi,-\mu_s,\varphi_s)\vF''
\cr&
+  \psi^2(z,\mu)\vP^2(\mu,\varphi,-\mu_s,\varphi_s)\vF''
\cr&
+ \pi\delta(\mu+\mu_s)\e^{-\frac z\mu}\left[\delta(\varphi-\varphi_s)-\frac1{2\pi}
\right.\cr&\left.
-\frac2{3\pi(1+2\mu_s^2)}\vP^1(\mu,\varphi,-\mu_s,\varphi_s) 
\right.\cr&\left.
-\frac4{3\pi(1+\mu_s^2)^2}\vP^2(\mu,\varphi,-\mu_s,\varphi_s) 
\right]\vF''
\end{align}
where $\vF''(T) = \tfrac{c_s}2 B_\nu(T_s)[1,1]^T$, where $\psi^1,\psi^2$ are solutions of 
\begin{align*}&
\mu\p_z\psi^1+\kappa\psi^1=\frac{3\sigma_s \beta}8\int_{-1}^1(1-\mu'^2)(1+2\mu'^2)\psi^1(z,\mu')\d\mu',
\cr&
\psi^1(Z,\mu)|_{\mu<0} = \tfrac83[(1+2\mu_s^2)(1-\mu_s^2)]^{-1}\delta(\mu+\mu_s), \quad \psi^1(0,\mu)|_{\mu>0}=0,
\cr&
\mu\p_z\psi^2+\kappa\psi^2=\frac{3\sigma_s \beta}{16}\int_{-1}^1(1+\mu'^2)\psi^2(z,\mu')\d\mu',
\cr&
\psi^2(Z,\mu)|_{\mu<0} = \tfrac{16}3(1+\mu_s^2)^{-2}\delta(\mu+\mu_s), \quad \psi^2(0,\mu)|_{\mu>0}=0,
\end{align*}
and where $\bar\vI'=[\tilde I'_l,\tilde I'_r]^T$ is given by  
\begin{align}\label{vrtegend}&
\mu\p_z \bar\vI' + \kappa\bar\vI'= \frac{\sigma_s}{2}\int_{-1}^1\Z(z,\mu',\mu)\bar\vI'(z,\mu')d\mu' 
\cr&
\bar I'(Z,\mu)= c_s B_\nu(T_s) \delta(\mu+\mu_s)\delta(\varphi-\varphi_s),  \quad \mu<0,
\cr&\ds
\bar I'(0,\mu,\varphi) =  \frac{q_0}{2}\int_{-1}^0 I'(0,\mu')\mu'\d\mu', \quad  \mu>0.
\end{align}
\end{proposition}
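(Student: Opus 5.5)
The approach is a Fourier decomposition of $\vI'$ in the azimuth $\varphi-\varphi_s$, carried out in the $[I_l,I_r]$ variables of the Remark. Only the top-left $2\times2$ blocks of the Rayleigh matrix in \eqref{Zp} are used, since $U,V$ carry no boundary source here. The phase matrix $\Z'=\vP^0+(1-\mu^2)^{1/2}(1-\mu'^2)^{1/2}\vP^1+\vP^2$ contains only the harmonics $1$, $\cos(\varphi'-\varphi)$ and $\cos 2(\varphi'-\varphi)$. The boundary datum $\delta(\varphi-\varphi_s)$ is expanded as $\frac1{2\pi}\big(1+2\sum_{m\ge1}\cos m(\varphi-\varphi_s)\big)$. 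The equation \eqref{vrtegenc} is linear and invariant under rotation in $\varphi$, so each harmonic $m$ evolves independently. For $m\ge3$ the scattering integral annihilates the harmonic, because $\int_0^{2\pi}\cos m(\varphi'-\varphi_s)\cos k(\varphi'-\varphi)\,d\varphi'=0$ for $k\le2$. Those components therefore satisfy the pure transport equation $\mu\p_z u+\kappa u=0$ with their share of the Dirac source at $z=Z$. The Lambert reflection at $z=0$ also kills every $m\ge1$, since it averages over $\varphi'$.

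\textbf{Step 1 (unscattered part).} I first write the exact uncollided solution of the transport equation with datum $c_sB_\nu(T_s)\delta(\mu+\mu_s)\delta(\varphi-\varphi_s)$, namely an attenuated beam $\delta(\mu+\mu_s)\e^{-z/\mu}\delta(\varphi-\varphi_s)$ (in optical depth). From it I subtract its projections on the harmonics $m=0,1,2$. These projections are exactly the terms $\frac1{2\pi}$, $\frac{2}{3\pi(1+2\mu_s^2)}\vP^1$ and $\frac{4}{3\pi(1+\mu_s^2)^2}\vP^2$ in the last bracket of \eqref{siew}. The normalising constants come from writing $\cos m(\varphi-\varphi_s)$ as $\vP^m(\mu,\varphi,-\mu_s,\varphi_s)\vF''$ divided by the scalar prefactor of $\vP^m$ evaluated at $\mu=-\mu_s$. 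This gives $\mu\mu'=-\mu_s^2$ and $\frac34(1-\mu_s^2)\cdot 4\mu_s^2$ for $m=1$, and $\mu^2\mu'^2=\mu_s^4$ together with the other entries for $m=2$. I would check these constants on $\vF''=[1,1]^T$, where the $\vP^2$ block gives $\frac34\cos2(\cdot)(\mu_s^4-2\mu_s^2+1)$.

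\textbf{Step 2 ($m=0$).} The harmonic $m=0$ carries the $\varphi$-average of the Dirac datum and the Lambert term. Its equation, with kernel $\bar\Z'=\vP^0$ (mixed with the isotropic part), is \eqref{vrtegend}, which produces $[\tilde I'_l,\tilde I'_r]$.

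\textbf{Step 3 ($m=1,2$).} For $m=1$ I make the ansatz $[(1-\mu^2)(1-\mu_s^2)]^{1/2}\psi^1(z,\mu)\vP^1(\mu,\varphi,-\mu_s,\varphi_s)\vF''$ and do the same for $m=2$ with $\psi^2$. Inserting it into \eqref{vrtegenc}, the $\varphi'$-integral of $\cos(\varphi''-\varphi')\cos(\varphi'-\varphi)$ returns $\pi\cos(\varphi''-\varphi)$. The $2\times2$ matrix structure factorises because $\vP^1$ has rank one (a $\mu\mu'$ entry), and $\vP^2$ is, up to scalars, the rank-one outer product $[\mu^2,-1]^T[\mu'^2,-1]$. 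This reduces each harmonic to a scalar equation for $\psi^m$, with the kernels $(1-\mu'^2)(1+2\mu'^2)$ and $(1+\mu'^2)^2$ arising from the products of the rank-one vectors with the accompanying $\mu'$ factors. The Dirac boundary data for $\psi^m$ are then fixed so that the collided and uncollided parts together reproduce the $m$-th harmonic of the datum; this yields the constants $\frac83[(1+2\mu_s^2)(1-\mu_s^2)]^{-1}$ and $\frac{16}3(1+\mu_s^2)^{-2}$. The bottom conditions $\psi^m(0,\mu)=0$ for $\mu>0$ follow because Lambert reflection has no $m\ge1$ component.

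\textbf{Main obstacle.} The hard step is the bookkeeping of Step 3. The rank-one reductions must be made precise, with the correct left/right vectors for $\vP^2$. I also have to check that the stated $\psi^2$ kernel $(1+\mu'^2)$ matches what the computation gives (I expect $(1+\mu'^2)^2$ or a normalisation that absorbs it), and that the factor $\frac{3\sigma_s\beta}{8}$ absorbs the $\frac{\sigma_s}{4\pi}\cdot\pi\cdot\frac34$ and the isotropic part. The isotropic part drops out for $m\ge1$. To fix every constant I would test the whole decomposition in the single-scattering limit $\sigma_s\to0$.
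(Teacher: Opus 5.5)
Your overall route is the one the paper relies on: decompose in azimuth, let $m=0$ give $\bar\vI'$, note that the harmonics $m\ge3$ and the Lambert term stay out of the azimuth-dependent part, and reduce $m=1,2$ to scalar problems through rank-one kernels. This is the Siewert--Maiorino/Chandrasekhar reduction, which the paper cites instead of proving.

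The gap is your opening simplification, ``only the top-left $2\times2$ blocks are used, since $U,V$ carry no boundary source''. This holds for $V$ but fails for $U$. In Chandrasekhar's matrices, the $I_l$ row and column of $\vP^{(1)}$ are coupled to $U$ by entries proportional to $\sin(\varphi'-\varphi)$; the $I_l,I_r$ rows and columns of $\vP^{(2)}$ are coupled to $U$ by entries proportional to $\sin2(\varphi'-\varphi)$. Examples are the $(l,U)$ and $(U,l)$ entries $\tfrac32\mu\sin(\varphi'-\varphi)$ and $-\tfrac32\mu'\sin(\varphi'-\varphi)$ of $\vP^{(1)}$. So the first scattering of the unpolarized beam creates $U\propto\sin m(\varphi-\varphi_s)$, which then feeds back into $I_l$ (and into $I_r$ for $m=2$). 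Zero boundary data do not make $U$ vanish.

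If you run Step 3 with the displayed $2\times2$ blocks alone, the rank-one reduction gives the kernel $\tfrac34\mu'^2(1-\mu'^2)$ for $m=1$ and $\tfrac3{16}(1+\mu'^4)$ for $m=2$. The self-consistent constants are then $\frac1{3\pi\mu_s^2}$ and $\frac4{3\pi(1+\mu_s^4)}$ in the bracket. The Dirac weights in the data of $\psi^1,\psi^2$ become $\tfrac43[\mu_s^2(1-\mu_s^2)]^{-1}$ and $\tfrac{16}3(1+\mu_s^4)^{-1}$. None of these matches the statement.

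The missing pieces $\tfrac38(1-\mu'^2)$ and $\tfrac38\mu'^2$ are exactly the $U$ contributions. Adding them yields Chandrasekhar's characteristic functions $\tfrac38(1-\mu^2)(1+2\mu^2)$ and $\tfrac3{16}(1+\mu^2)^2$. Their reciprocals at $\mu=\mu_s$ are precisely the stated Dirac weights. So you are right that the $\psi^2$ kernel should be $(1+\mu'^2)^2$, but it cannot come out of the $2\times2$ block. Calibrating in the single-scattering limit will not expose the omission either: with or without $U$, the components $I_l,I_r$ agree to first order in $\sigma_s$.

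The same point breaks your Step 1 bookkeeping: the bracket does not remove the full $m$-th Fourier harmonic of the beam. For $m=1$ that harmonic is $\frac1\pi\cos(\varphi-\varphi_s)\vF''$, with both components nonzero. But $\vP^1(\mu,\varphi,-\mu_s,\varphi_s)\vF''$ has only an $l$-component, so the harmonic is not a multiple of it. What is removed is only the projection onto the range of the rank-one $m$-th scattering operator acting on $(I_l,I_r,U)$. The complementary part never scatters and stays in the uncollided term.

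To close the argument for $m=1$, set $I_l=\mu(1-\mu^2)^{1/2}A\cos(\varphi-\varphi_s)$ and $U=(1-\mu^2)^{1/2}B\sin(\varphi-\varphi_s)$. With Chandrasekhar's factor $2$ on the $U$ row, $A$ and $B$ receive the same source $\tfrac38\sigma_s\beta\,G$, where $G=\int_{-1}^1(1-\mu'^2)(2\mu'^2A+B)\,d\mu'$. Since $B$ has zero data, $B=A_{\rm col}$. Then $\psi=A_{\rm col}+\frac{2\mu^2}{1+2\mu^2}A_{\rm unc}$ satisfies the closed scalar equation with kernel $\tfrac38(1-\mu'^2)(1+2\mu'^2)$. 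The leftover share $\frac1{1+2\mu_s^2}$ of the uncollided $I_l$-harmonic is exactly what the constant $\frac2{3\pi(1+2\mu_s^2)}$ leaves in the bracket. The case $m=2$ goes the same way, with the vector $(\mu^2,-1,\mu)$ on $(I_l,I_r,U)$ and normalisation $(1+\mu_s^2)^2$.
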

The reader is sent to C. Siewert \& J. Maiorino \cite{siewertM} for the details.
\begin{remark}
The notation $\bar\vI'$ instead of $\tilde\vI'$ is justified because it is indeed the $\varphi$-mean of $\vI'$ since \eqref{siew} averaged contains only zeros except the first term on the right hand side. Indeed the averages of $\vP^1$ and $\vP^2$ are zero and 
\[
\frac1{2\pi}\int_0^{2\pi}[\delta(\varphi-\varphi_s)-\frac1{2\pi}]\d\varphi = 0.
\]
\end{remark}
\begin{remark}
We can aggregate $\bar\vI$ with $\bar\vI'$ and note that $\tilde\vI:=\bar\vI+\bar\vI'$ is given by 
\begin{align}\label{vrtegend}&
\mu\p_z \tilde\vI + \kappa\tilde\vI= \frac{\sigma_s}{2}\int_{-1}^1\Z'(z,\mu',\mu)\tilde\vI(z,\mu')d\mu' +\vF'(T)
\cr&
\tilde\vI(Z,\mu)=  \delta(\mu+\mu_s)\vF'' , \quad \mu<0,
\cr&\ds
\tilde \vI(0,\mu,\varphi) =  \frac{c_e}2 \mu B_\nu(T_e)[1,1]^T + \frac{q_0}{2}\int_{-1}^0 \tilde \vI(0,\mu')\mu'\d\mu', \quad  \mu>0.
\end{align}
\end{remark}
\begin{proposition}
The temperature in the atmosphere is given by  \eqref{vrtegend} which is the VRTE for $\tilde I$  with semi-collimated light   input at the tropopause.
\end{proposition}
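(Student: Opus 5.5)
The temperature enters the system only through the energy balance, the second equation of \eqref{vrtegen}. It involves only the zeroth angular moment of the intensity, $\int_0^{2\pi}\int_{-1}^1 \sigma_a I\,d\mu\,d\varphi$. The plan is therefore to show that this moment, computed from the full solution $\vI$ of \eqref{vrtegen}--\eqref{bdc}, equals $2\pi\int_{-1}^1\sigma_a\tilde I\,d\mu$, where $\tilde I=\tilde I_l+\tilde I_r$ comes from \eqref{vrtegend}. The temperature equation then closes on $\tilde\vI$ alone, and the pair ($\tilde\vI$, $T$) solves a one-dimensional, azimuth-free VRTE. The semi-collimated source $\delta(\mu+\mu_s)\vF''$ at $z=Z$ replaces the full Dirac mass in $(\mu,\varphi)$.

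The first step uses the first Proposition to split $\vI=\bar\vI+\vI'$. The second step inserts the Siewert--Maiorino representation \eqref{siew} for $\vI'$ and integrates each term in $\varphi$ over $(0,2\pi)$. The terms multiplied by $\vP^1(\mu,\varphi,-\mu_s,\varphi_s)$ and $\vP^2(\mu,\varphi,-\mu_s,\varphi_s)$ vanish, because their $\varphi$-dependence is $\cos(\varphi-\varphi_s)$ or $\cos2(\varphi-\varphi_s)$. The bracket $\delta(\varphi-\varphi_s)-\frac1{2\pi}$ also has zero mean, as observed in the first Remark after the decomposition. Only $\tilde\vI'=\bar\vI'$ survives, so the $\varphi$-mean of $\vI$ is $\bar\vI+\bar\vI'$. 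Converting back from $[I_l,I_r]$ to $I$ via $I=I_l+I_r$, the right-hand side of the energy equation becomes $\frac12\int_{-1}^1\sigma_a(\tilde I_l+\tilde I_r)\,d\mu$, integrated in $\nu$.

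The third step is the second Remark. By linearity, the equations for $\bar\vI$ (volume source $\vF'$, Lambert emission $c_e$, zero incoming radiation at $Z$) and for $\bar\vI'$ (no volume source, collimated input, reflection only) add up to \eqref{vrtegend}. One check is needed here: the $\varphi$-average of the Dirac boundary datum $c_sB_\nu(T_s)\delta(\mu+\mu_s)\delta(\varphi-\varphi_s)$, split equally between $I_l$ and $I_r$, is exactly $\delta(\mu+\mu_s)\vF''$ up to the $2\pi$ normalisation. That is what makes $\vF''=\frac{c_s}2B_\nu(T_s)[1,1]^T$ the correct semi-collimated datum. The proof then concludes that $T$ is determined by the coupled system \eqref{vrtegend} plus the energy balance.

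The main obstacle is bookkeeping rather than analysis. The factors $1/2\pi$ versus $1/4\pi$ in the $\varphi$-average of the boundary Dirac have to come out consistently with the paper's normalisation of $\bar\Z'=\vP^0$. The mean-zero property of the $\psi^1,\psi^2$ terms must hold pointwise in $z$ and $\mu$, which it does since those terms factor as a function of $(z,\mu)$ times a pure harmonic in $\varphi$. I would also verify that the reflection condition at $z=0$ commutes with $\varphi$-averaging, which is immediate since it is already an integral over $\varphi'$.
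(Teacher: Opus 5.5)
Your route is the paper's own. The proposition is supported there only by the two Remarks after \eqref{siew}: the $\varphi$-mean kills the $\vP^1$, $\vP^2$ and bracket terms, and $\bar\vI+\bar\vI'$ solves \eqref{vrtegend} by linearity. Add the fact that the energy balance sees only the $\varphi$-mean of $I=I_l+I_r$, and you have the whole argument. Those steps of yours are fine.

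The gap is the one check you flag and then wave through ``up to the $2\pi$ normalisation'': it does not come out. Your second step uses that $\delta(\varphi-\varphi_s)-\frac1{2\pi}$ has zero mean, which fixes the convention $\int_0^{2\pi}\delta(\varphi-\varphi_s)\,d\varphi=1$. With that convention, the mean of the top datum of \eqref{bdc}, split equally between $I_l$ and $I_r$, is
\[
\frac1{2\pi}\int_0^{2\pi}\tfrac{c_s}{2}B_\nu(T_s)\,\delta(\mu+\mu_s)\,\delta(\varphi-\varphi_s)\,[1,1]^T\,d\varphi=\frac1{2\pi}\,\delta(\mu+\mu_s)\,\vF'' .
\]
By contrast, $\delta(\mu+\mu_s)\vF''$ is its $\varphi$-\emph{integral}.

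You are not free to choose this normalisation. The source $\vF'$, the Lambert term and the energy balance $\int\sigma_aB_\nu(T)\,d\nu=\int\tfrac12\int_{-1}^1\sigma_a\tilde I\,d\mu\,d\nu$ are all normalised as means; that is how \eqref{vrtegenb}, \eqref{bdcb} were obtained from \eqref{vrtegen}, \eqref{bdc}. So $\tilde\vI$ cannot be rescaled to absorb the factor. Multiplying the solar input alone by $2\pi$ changes the absorbed energy and hence $T$.

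What your argument actually proves is the proposition with top datum $\frac1{2\pi}\delta(\mu+\mu_s)\vF''$. Equivalently, the $c_s$ in \eqref{vrtegend} must stand for $c_s/(2\pi)$, measured against the $c_s$ of \eqref{bdc}. You should state and prove that version. The paper leaves this point open: its problem for $\bar\vI'$ still carries $\delta(\varphi-\varphi_s)$, and the prefactor $\pi$ in \eqref{siew} reflects yet another convention.

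A smaller omission: you show only that the true $T$ solves the reduced system. To say that $T$ is \emph{given by} it, you also need the converse. It is immediate because $\vI'$ does not depend on $T$: adding $\vI'-\bar\vI'$ to any solution $\tilde\vI$ of the reduced system gives a solution of \eqref{vrtegen}, \eqref{bdc} with the same $T$ and the same $\varphi$-mean.
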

\subsection{Integral formulation}
%%%%%%%%%%%%%%%%%%%%%%%%%%%%%
Recall that if $I_\nu$ satisfies  $\mu\partial_z I_\nu+\kappa_\nu I_\nu = S$, the method of characteristics  tells us that
\begin{align}\label{I9b}
	I_\nu(z,\mu) &= \One_{\mu>0}\left[ \e^{-\frac1\mu \kappa_\nu z }I_\nu(0,\mu)
	+ \int_{0}^z \frac1\mu\e^{-\frac1\mu \kappa_\nu(z-z')}S(z')\d z'\right]
\cr&
	+ \One_{\mu<0}\left[ \e^{\frac1\mu \kappa_\nu(Z-z) }I_Z(\mu)
	- \int_{z}^Z \frac1\mu\e^{\frac1\mu \kappa_\nu(z'-z)}S(z')\d z'\right].
\end{align}
For $\psi^1$
\begin{align}&
S^1= \frac34\beta\sigma_s[\frac12\int_{-1}^1(1-\mu^2)(1+2\mu^2)\psi^1(\mu)\d\mu]
=  \frac34\beta\sigma_s[J^1_0+J^1_2-2 J^1_4]
\cr&
\hbox{where }~J^1_q = \frac12\int_{-1}^1\mu^q\psi^1\d\mu.
\end{align}
Using the boundary conditions and   \eqref{I9b} multiplied by $\mu^q$ and integrating in $\mu$ leads to
\begin{equation}\label{jq}
J^1_q(z) = \frac{8\e^{-\kappa_\nu\frac{Z-z}{\mu_s}}}{3(1+2\mu_s^2)(1-\mu_s^2)}
+ \frac12 \int_0^Z  E_{q+1}(\kappa_\nu|z-z'|)S^1\d z'
\end{equation}
where $E_q$ is the $q^{th}$ exponential integral.
For  $\psi^2 $
\begin{align}&
S^2= \frac{16}3\beta\sigma[\frac12\int_{-1}^1(1+\mu^2)(\psi^2(\mu)\d\mu]
=  \frac{16}3\beta\sigma[J^2_0+J^2_2]
\cr&
J^2_q = \frac12\int_{-1}^1\mu^q\psi^2\d\mu =  \frac{16\mu_s^q\e^{-\kappa_\nu\frac{Z-z}{\mu_s}}}{3(1+\mu_s^2)^2}
+ \frac12\int_0^Z [E_{q+1}(\kappa_\nu|z-z'|)S^2\d z'
\end{align}
To compute pointwise values of $\psi^1$ or $\psi^2$ use \eqref{I9b}. 

 Some integrals are singular. Claude.ai from Anthropic proposed the following formula which it claims to be almost always second order accurate. It also provides the C++ implementation.
 
 Let $0=x_0<x_1<\dots<x_N=X$, $f_i=f(x_i)$, and
\[
I(y)=\int_0^X f(x)\,\frac{e^{-|x-y|/m}}{m}\,dx
\;\approx\;
\sum_{i=0}^{N-1}\int_{x_i}^{x_{i+1}} \Pi_h f(x)\,\frac{e^{-|x-y|/m}}{m}\,dx ,
\]
where $\Pi_h f$ is the piecewise linear interpolant of $f$.
Each cell integral is computed exactly. For a cell $[a,b]$, $h=b-a$:

\medskip
\emph{Cell left of $y$} ($b\le y$), with $\alpha=e^{-(y-a)/m}$, $\beta=e^{-(y-b)/m}$:
\[
\int_a^b \Pi_h f\,\frac{e^{-(y-x)/m}}{m}\,dx
= f_a\!\left[\frac{m}{h}(\beta-\alpha)-\alpha\right]
+ f_b\!\left[\beta-\frac{m}{h}(\beta-\alpha)\right].
\]

\emph{Cell right of $y$} ($a\ge y$), with $\alpha=e^{-(a-y)/m}$, $\beta=e^{-(b-y)/m}$:
\[
\int_a^b \Pi_h f\,\frac{e^{-(x-y)/m}}{m}\,dx
= f_a\!\left[\alpha-\frac{m}{h}(\alpha-\beta)\right]
+ f_b\!\left[\frac{m}{h}(\alpha-\beta)-\beta\right].
\]

\emph{Cell containing $y$}: split at $y$, set $f_y=f_a+(f_b-f_a)\frac{y-a}{h}$,
and apply the two formulas above on $[a,y]$ and $[y,b]$.

\medskip
The error satisfies
\[
|I(y)-I_h(y)|\le \frac{h^2}{8}\,\|f''\|_\infty \int_0^X \frac{e^{-|x-y|/m}}{m}\,dx
\le \frac{h^2}{4}\,\|f''\|_\infty ,
\]
uniformly in $m$; the rule is exact for $f\equiv 1$, giving
$2-e^{-y/m}-e^{-(X-y)/m}$.
 
\subsection{Iterations on the Source}
%%%%%%%%%%%%%%%%%%

It is shown in \cite{bookRTE} that the following iterations to compute the moments of $\psi^1$ are monotone and convergent,
\begin{enumerate}
\item Initialize $S$.
\item Compute $J_q$ by \eqref{jq}
\item Update $S$ by $S=  \frac34\beta\sigma[J_0+J_2-2 J_4]$.
\end{enumerate}
After a few iterations $S(z)$ will be computed and if $q_0=0$, $\psi^1$ can be displayed by
\[
\psi^1=  \One_{\mu<0}\e^{\frac1\mu \kappa_\nu(Z-z) }c_s B_\nu(T_s)
	+ \int_{0}^Z \frac1{|\mu|}\e^{-\frac1{|\mu]} \kappa_\nu|z'-z|}S(z')\d z'.
\]
If $q_0<0$ a slightly more complex formula can be found in \cite{bookRTE}.

Naturally the same procedure can be applied to compute $\psi^2$

\subsection{Computation of $ \tilde I$ and $\tilde Q$}
%%%%%%%%%%%%%%%%%%

A system of equations is derived easily for $(\tilde I,\tilde Q)$ by linear combinations  in \eqref{vrtegend}
\begin{align}\label{lq0}\ds &
\mu \p_z \tilde I + \kappa \tilde I =\sigma_a B_\nu(T) + \frac{\sigma_s}2\int_{-1}^1 \tilde I\d\mu'
%\cr&&
+ \frac{\beta\sigma_s}4 P_2(\mu)\int_{-1}^1 [P_2 \tilde I-(1-P_2 )\tilde Q]\d\mu',
\cr&
\mu \p_z \tilde Q + \kappa \tilde Q = -\frac{\beta\sigma_s}4 (1-P_2(\mu))\int_{-1}^1 [P_2 \tilde I-(1-P_2 )\tilde Q]\d\mu',
\cr&
\tilde I(Z,\mu)=  \delta(\mu+\mu_s) c_s B_\nu(T_s), \quad \tilde Q(Z,\mu)=0,~~ \mu<0, 
\cr&\ds
\tilde I(0,\mu,\varphi) =  c_e \mu B_\nu(T_e) + q_0\int_{-1}^0 \tilde I(0,\mu')\mu'\d\mu', \quad \tilde Q(0,\mu)=0,~ \mu>0,
\end{align}
where  $P_2(\mu)=\tfrac12(3\mu^2-1)$. 

The integral formulation follows.
Denote
\begin{align}\label{JKK}
	J_q(z) = \tfrac12\int_{-1}^1 \mu^q\tilde I\d\mu
\quad
	K_q(z) = \tfrac12\int_{-1}^1 \mu^q\tilde Q\d\mu, 
\quad
	K_2(z) = \tfrac12\int_{-1}^1 \mu^2\tilde Q\d\mu .
\end{align}
Then, 
\begin{align}\label{iqpde}\ds 
	\mu \p_z \tilde I + \kappa \tilde I =&\sigma_a B_\nu + \sigma_s J_0
	+ \frac{\beta\sigma_s}4 P_2(\mu)(3 J_2 - J_0 -3 K_0 + 3 K_2),
\cr
	\mu \p_z \tilde Q + \kappa \tilde Q 
	= &-\frac{\beta\sigma_s}4 (1-P_2(\mu))(3J_2-J_0 - 3 K_0 +3 K_2).
\end{align}
By the method of characteristics and Proposition 1.10 in \cite{bookRTE} ,
\begin{align}\label{JK}\ds
J_q(z) &= R_q +\tfrac{1}2\int_0^Z \left(\E_{q+1}(\kappa_\nu|z-z'|)S_0(z')+ \E_{q+3}(\kappa_\nu|z-z'|)S_2(z')\right)\d z',
\cr
K_q(z) &=\tfrac12\int_0^Z \left(\E_{q+1}(\kappa_\nu|z-z'|)S'_0(z') + \E_{q+3}(\kappa_\nu|z-z'|)S'_2(z')\right)\d z',
\end{align}
with $c_0$ and $R_q$ given by
\begin{align}\label{c0}
&c_0 = - q_0 \mu_s c_s B_\nu(T_s)\e^{-\frac{\kappa_\nu Z}{\mu_s}}
\cr&
~~~~~ -q_0 \int_0^Z \left\{\E_2(\kappa_\nu z)\left[ \sigma_a B_\nu(T) +  \sigma_s J_0-\tfrac13 H
\right]+\E_4(\kappa_\nu z)H\right\}\d z,
\end{align}
\begin{align}\label{SS0}&
R_q(z,\nu) = \tfrac{c_e}2  B_\nu(T_e) \E_{q+3}(\kappa_\nu z)  + \tfrac{c_s}2  B_\nu(T_s)\mu_s^q\sigma_s\e^{-\kappa_\nu\frac{Z-z}{\mu_s}}+
\tfrac{c_0}2 \E_{q+2}(\kappa_\nu z),
\end{align}
Denote $H(z,\nu) = \frac{9\beta\sigma_s}8(J_2-\tfrac13 J_0 - K_0 + K_2)$. Then
\begin{align*}&
S=S_0+\mu^2 S_2, \quad S'=S'_0+\mu^2 S'_2,
\cr& 
S_0=\sigma_a B_\nu + \sigma_s J_0 - \frac13 H,
\qquad
S_2 = H,
\qquad%\cr&
S'_0=-H,
\qquad
S'_2 = H.
\end{align*}
So at each iteration we only need to compute, for $q=0,2$,
\begin{align}\label{JKq}
J_q(z) &= R_q +\tfrac12 \int_0^Z \left(\E_{q+1}(\kappa_\nu|z-z'|)(\sigma_a B_\nu + \sigma_s J_0 - \frac13 H) +  \E_{q+3}(\kappa_\nu|z-z'|)H \right)\d z',
 \cr
 K_q(z) &=  -\tfrac12 \int_0^Z \left(\E_{q+1}(\kappa_\nu|z-z'|) - \E_{q+3}(\kappa_\nu|z-z'|)\right)H\d z',
 \end{align}
 The Iterations on the source using the integral formulation (ISIF) applies to the vector $[\tilde I,\tilde Q]^T$.
 \begin{enumerate}
 \item Initialize $J_q,K_q,q=0,2$.
 \item Compute $c_0$ with \eqref{c0} and $R_q,~ q=0,2$ with \eqref{SS0}.
 \item Compute $T$  by solving the temperature equation in \eqref{vrtegenb} with dichotomy and Newton iterations.
 \item Update $J_q,K_q,~q=0,2$ with \eqref{JKq}.
 \end{enumerate}

Then $\tilde I$ and $\tilde Q$ are found by
\begin{align}\label{charac}&
\tilde I(0,\mu) |_{\mu>0}= c_e B_\nu(T_e)\mu - q_0\mu_s\e^{-\frac{\kappa_\nu Z}{\mu_s}}
- q_0\int_0^Z [E_2(\kappa_\nu z) S_0(z)+E_4(\kappa_\nu z) S_2(z)]\d z,
\cr&
\tilde I(z,\mu) = I(0,\mu)\e^{-\frac{\kappa_\nu z}{\mu} }+ \int_0^z \e^{-\frac{\kappa_\nu (z-z')}{\mu} }\frac{S(z',\mu)}\mu\d z', ~~\mu>0,
\cr&
\tilde I(z,\mu) = \delta(\mu+\mu_s)c_s B_\nu(T_s)e^{-\frac{\kappa_\nu(Z-z)}{\mu_s}} + \int_z^Z \e^{\frac{\kappa_\nu (z'-z)}{\mu} }\frac{S(z',\mu)}\mu\d z', ~~\mu<0,
\cr&
\tilde Q(z,\mu) =\int_0^z \e^{-\frac{\kappa_\nu (z-z')}{\mu} }\frac{S'(z',\mu)}\mu\d z', ~~\mu>0,
\cr&
\tilde Q(z,\mu) =  \int_z^Z \e^{\frac{\kappa_\nu (z'-z)}{\mu} }\frac{S'(z',\mu)}\mu\d z', ~~\mu<0,
\end{align}

\section{Numerical results}
%%%%%%%%%%%%%%%

The radiation intensities and albedo are computed with 
\[
c_e=2,\quad c_s=2\times 10^{-6}, \quad q_0=-0.3,\quad T_e=18^oC, \quad T_s =5800K.
\]
The collimated light from the sun comes with $\mu_s=0.5$ and $\varphi_s=0$.
The phase matrix is built with $\beta=0.5$.
The density is $\rho=1-0.7z$.
The scattering coefficient is
\[
a_s = 0.3   + 0.3*16(\nu<\nu_2)(\nu>\nu_1)((\nu-\nu_1)(\nu-\nu_2)/(\nu_1-\nu_2)^2)^2 (z>z_3)
    \]
    with $z_3=0.8$, $\nu_1=0.5,~\nu_2=1$.
  $\kappa_\nu$ is read from the Gemini telescope website \cite{gemini} compacted into 554 frequency readings. A portion of the spectrum is displayed in figure \ref{gemini}
  \begin{figure}[htbp]%gemini 
\begin{center}
\begin{tikzpicture}%[scale=0.55]
\begin{axis}[width=12cm, height=6cm, legend style={at={(1,1)},anchor=east}, compat=1.3,
   ylabel= {absorption coefficient},
  xlabel= {Wavelength}
  ]
\addplot[thick,solid,color=blue,mark=none, mark size=1pt] table [x index=0, y index=1]{kappafig.txt};
\addlegendentry{$\kappa_\nu$}
\end{axis}
\end{tikzpicture}
\caption{\footnotesize  \label{gemini} Absorption $\kappa_\nu$ versus wavelength ($c/\nu$) in the range $(2,20)$  (the computational rangl is $(0.0025,250)$ )read from the Gemini data site \cite{gemini}. }
\end{center}
\end{figure}

    There are 50 discretization points on $[0,Z]$.  The $\mu$-integrals are computed with 40 quadrature points.
    The computing time on a MacBook Pro arm64 is 0.25 second. It is remarkably fast in view of the fact that at each of thee 10 iterationss all 4 PDEs are solved 554 times and the temperature equation is solved 50 times.
    
    \medskip
    
    Figure \ref{tempe} shows the temperature in the atmosphere after 9 iterations.  The cusp  near $z=0$ is due to the Lambert albedo.  Figure \ref{psi12} shows $\psi^1$ and $\psi^2$ at a given frequency versus altitude and cosine of the polar angle.
 \begin{figure}[htbp]
%%%%%%%%%%%%%%%%%%%%
\begin{minipage} [b]{0.45\textwidth}. 
\begin{center}
\begin{tikzpicture}[scale=0.7]
\begin{axis}[legend style={at={(0.97,0.9)},anchor= east}, compat=1.3,
   xlabel= {Altitude / 10km},
  ylabel= {Temperature $^o$C}
  ]
\addplot[thick,solid,color=blue,mark=+, mark size=1pt] table [x index=0, y index=1]{tempe.txt};
\end{axis}
\end{tikzpicture}
\caption{\footnotesize \label{tempe} Temperature versus altitude.}
\end{center}
%%%%%%%%%%%
\end{minipage}
\hskip0.5cm
\begin{minipage}[b]{0.45\textwidth}
\begin{center}
\includegraphics[width=7cm]{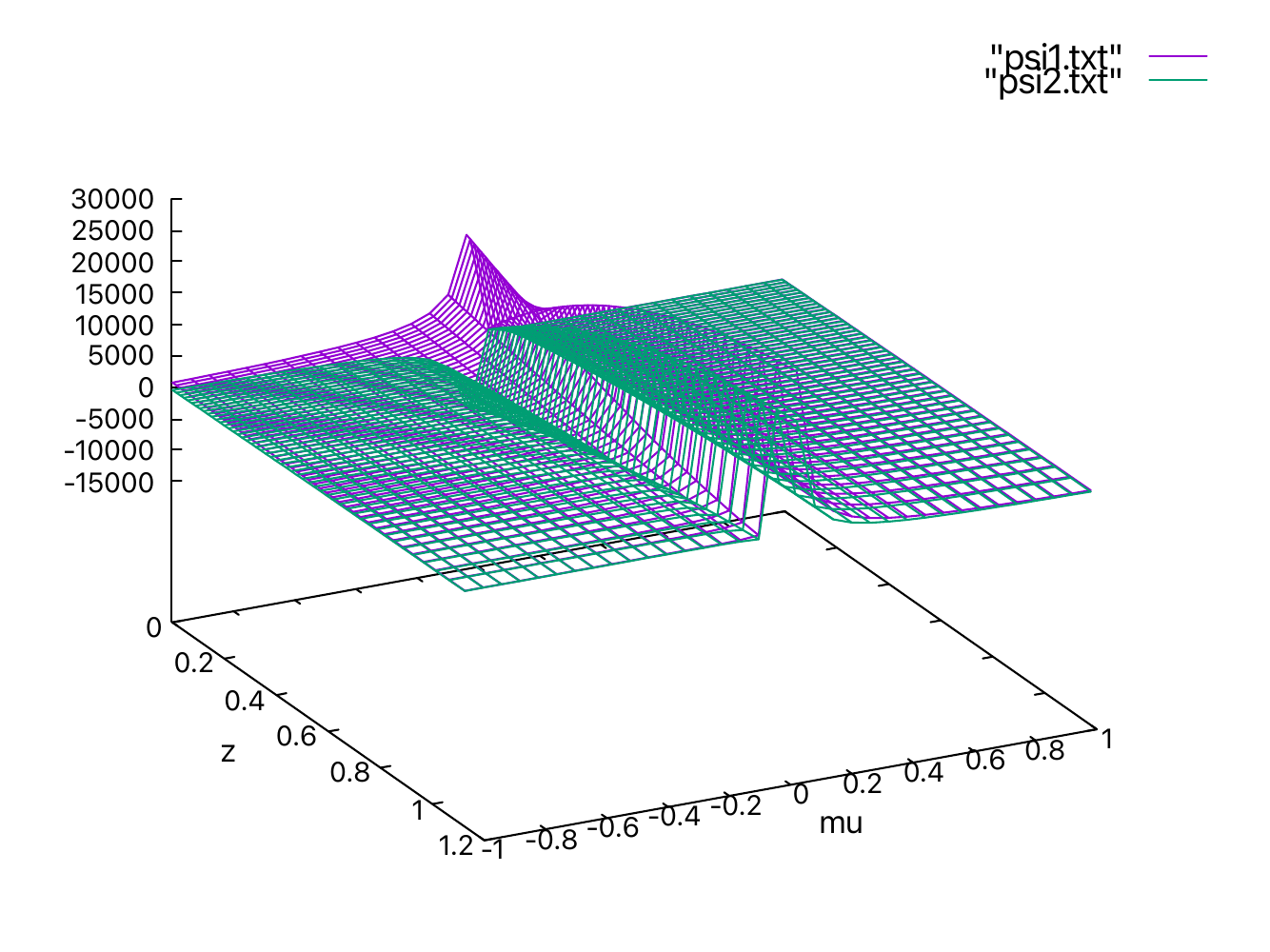}
\caption{\label{psi12}\footnotesize  $\psi^1$ and $\psi^2$  at  $\nu=4.2$ which is just outside the visible light range towards the infrared change, thence giving a chance to see both radiation input, the collimated light at $Z$ and the Lambert radiation at $z=0$.}
\end{center}
\end{minipage}
\end{figure}

Finally, figure \ref{Isurf} and figure \ref{Qsurf} show the radiative intensity $\tilde I$ and the polarization $\tilde Q$  versus altitude and cosine of the polar angle, $\mu$.
\begin{figure}[htbp]
%%%%%%%%%%%%%%%%%%%%
\begin{minipage} [b]{0.45\textwidth}. 
\begin{center}
\includegraphics[width=6cm]{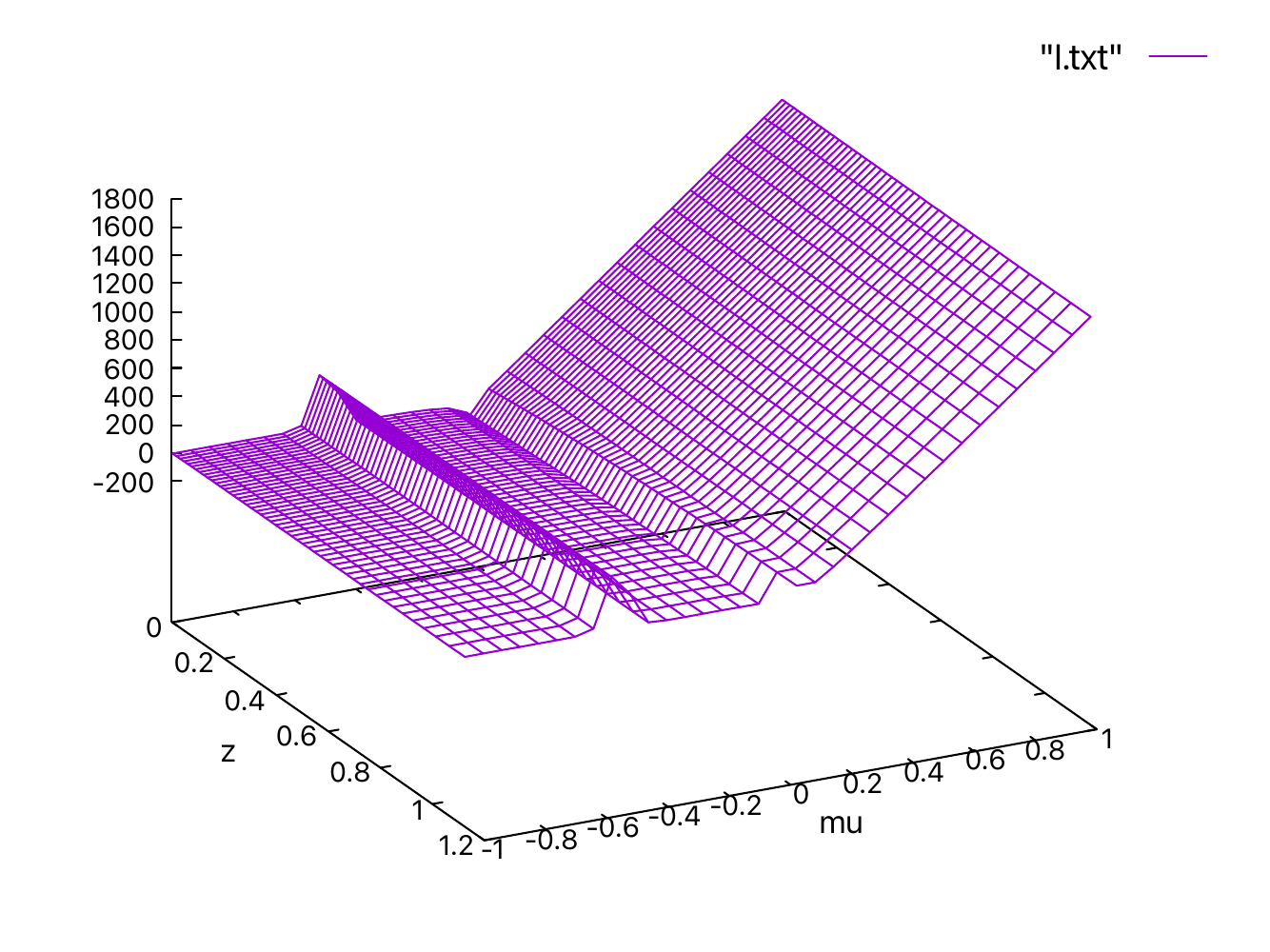}
\caption{\footnotesize \label{Isurf} $ \tilde I_\nu$ versus $z\in(0,Z)$ and $\mu\in(-1,1)$  at $\nu=4.2$. }
\end{center}
\end{minipage}
\hskip0.5cm
\begin{minipage}[b]{0.45\textwidth}
\begin{center}
\includegraphics[width=7cm]{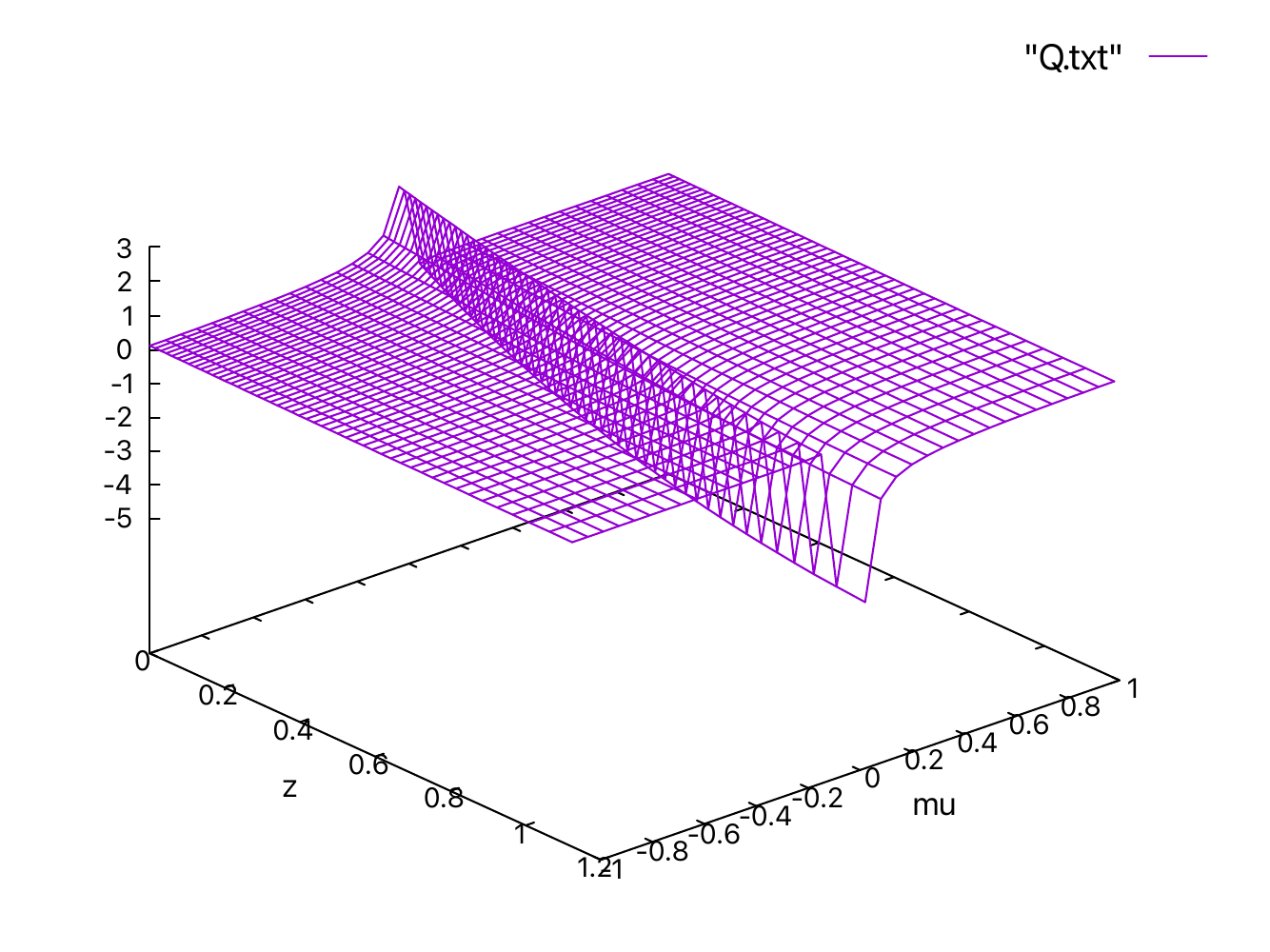}
\caption{\footnotesize \label{Qsurf} $ \tilde Q_\nu$ versus $z\in(0,Z)$ and $\mu\in(-1,1)$ at $\nu=4.2$.}
\end{center}
%%%%%%%%%%%
\end{minipage}
\end{figure}

\section*{Conclusion}
As far as the temperature in the atmosphere is concerned, fully collimated or semi-collimated sunlight on the tropopause give the same results.  Therefore one should ignore the azimuthal dependency and use semi-collimated light. To obtain the light intensity and the polarization for all azimuthal angles requires a small extra effort, namely the computation of two independent functions which can be obtained by the same algorithm, iterations on the source, hence a few extra lines in the computer program in the algorithmic loop.
\bibliographystyle{plain}
\bibliography{refbookRTE}

\end{document}